\crefname{subsection}{subsection}{subsections}
\renewcommand\section{\@startsection {section}{1}{\z@}
{-30pt \@plus -1ex \@minus -.2ex}
{2.3ex \@plus.2ex}
{\normalfont\normalsize\bfseries\boldmath}}
\renewcommand\subsection{\@startsection{subsection}{2}{\z@}
{-3.25ex\@plus -1ex \@minus -.2ex}
{1.5ex \@plus .2ex}
{\normalfont\normalsize\bfseries\boldmath}}
\renewcommand{\@seccntformat}[1]{\csname the#1\endcsname. }
\newtheorem{theorem}{Theorem}
\newtheorem{claim}[theorem]{Claim}
\newtheorem{lemma}[theorem]{Lemma}
\newtheorem{conj}[theorem]{Conjecture}
\newtheorem{question}[theorem]{Question}
\theoremstyle{definition}
\newtheorem*{definition*}{Definition}
\pgfplotsset{width=10cm,compat=1.9}
\newcommand*{\myproofname}{Proof}
\newenvironment{claimproof}[1][\myproofname]{\begin{proof}[#1]}{\end{proof}}
\def\a{\alpha}
\def\eps{\varepsilon}
\def\F{\mathcal{F}}
 \newcommand{\Exp}{\,\mathbb{E}}
 \renewcommand{\Pr}{\,\mathbb{P}}
\newcommand*{\abs}[1]{\lvert #1\rvert}
\title{Better bounds for the union-closed sets conjecture using the entropy approach}
\author{Stijn Cambie\thanks{Extremal Combinatorics and Probability Group (ECOPRO), Institute for Basic Science (IBS), Daejeon, South Korea, supported by the Institute for Basic Science (IBS-R029-C4), E-mail: {\tt stijn.cambie@hotmail.com}} }
\begin{document}

\maketitle

\begin{abstract}
We improve the best known constant $\frac{3-\sqrt 5}{2}$ for which the union-closed conjecture is known to be true, by using dependent samples as suggested by Sawin and the entropy approach on this problem initiated by Gilmer. 
Meanwhile, we focus on the intuition behind this entropy approach and its boundaries. 
\end{abstract}

\section{Introduction}

The union-closed conjecture is a challenging conjecture in extremal set theory, see e.g.~\cite[sec.~32]{FT18}, which became famous due to its elegance.
A union-closed family $\F$ is a collection of sets such that the union of any two sets belongs to $\F$ as well.
The union-closed conjecture states that if $\F$ contains at least one nonempty set, then there is an element that belongs to at least half of the sets in $\F.$
This can be formally stated as follows, where we recommend the reader who is not familiar with some of the terminology, notations or definitions to first have a look at Subsection~\ref{subsec:not&def}.

\begin{conj}[Union-closed conjecture]\label{conj:UC}
   If $\F \not= \{\emptyset\}$ is a union-closed family with ground set $[n],$ then there exists an element $i \in [n]$ such that at least half of the sets in $\F$ contain $i$, i.e., $\abs{\F(i)} \ge \frac{\abs \F}{2}.$
\end{conj}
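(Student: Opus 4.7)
The plan is to push the entropy method of Gilmer, combined with Sawin's dependent-sampling refinement, all the way to the critical threshold $p = \frac{1}{2}$. Assume for contradiction that $\F$ is a counterexample: a union-closed family on $[n]$ with $\abs{\F(i)} < \abs{\F}/2$ for every $i \in [n]$, and set $p_i = \abs{\F(i)}/\abs{\F} < \frac{1}{2}$. The strategy is to produce a coupled pair $(A,B)$ of $\F$-members with uniform marginals and $H(A \cup B) > H(A)$; since $A \cup B \in \F$ forces $H(A \cup B) \le \log_2\abs{\F} = H(A)$, this contradiction resolves the conjecture.

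Concretely, I would expand $H(A \cup B)$ via the chain rule in a coordinate order $\sigma$ chosen to maximise the per-coordinate slack, comparing each $H\bigl((A \cup B)_{\sigma(i)} \mid (A\cup B)_{\sigma(<i)}\bigr)$ with the corresponding term of $H(A)$. For independent samples this comparison only gains when $p_i < \frac{3-\sqrt{5}}{2}$, so a genuine coupling is needed in the residual regime $p_i \in [\frac{3-\sqrt{5}}{2},\tfrac12)$. A natural candidate is to sample a common seed $C \in \F$ from a carefully reweighted distribution and then draw $A$ and $B$ independently from $\F$ conditioned on containing $C$; averaging over $C$ concentrates the law of $A \cup B$ on larger sets and penalises the joint events $\{A_i=1,B_i=0\}$ and $\{A_i=0,B_i=1\}$ precisely where Gilmer's inequality is lossy, while a Radon--Nikodym correction is absorbed into the entropy calculus to preserve the marginal uniformity of $A$.

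The third step is to reduce everything to a scalar inequality of the form $\phi(p,q) < 0$ valid for all $p < \frac{1}{2}$ and all admissible $q$, where $q$ records a second-order statistic of the coupling such as $\Pr(i \in A \mid C)$. I would attack this via Lagrange multipliers on the polytope of feasible $(p_i,q_i)$, and verify tightness against the usual near-extremal families such as power sets, chains, and the $\{\emptyset,\{1\},\{2\},\{1,2\}\}$-style gadgets that any successful bound must respect.

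The main obstacle is precisely this last step: every pair-sampling entropy argument to date is blocked by an explicit barrier strictly below $\frac12$, because the single statistic $p_i$ does not encode enough of the lattice structure of $\F$ to close the gap. Breaking the barrier will almost certainly require either a richer coupling parameter---for instance, an iterated or bootstrapped coupling in which the union $A \cup B$ of one round feeds back into the seed $C$ of the next---or incorporating two-element statistics $p_{ij} = \Pr(i,j \in A)$ into the analysis. I expect this is where essentially all the remaining difficulty lies; once the right coupling is identified, the remaining algebra should follow by a careful but routine convexity analysis.
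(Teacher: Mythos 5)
You are attempting to prove a statement that this paper does not prove and that remains open: the paper only establishes the weaker Theorem~\ref{thr:improvement}, namely an element in a $c\abs{\F}$ fraction of the sets with $c\sim 0.3823455$, and it explicitly argues (Subsection~\ref{subsec:uppbound}) that the exact framework you are invoking cannot reach $\tfrac12$. Your proposal is therefore a research programme rather than a proof, and you concede the decisive gap yourself: the reduction to a scalar inequality $\phi(p,q)<0$ valid for all $p<\tfrac12$ is precisely the step that fails. The obstruction is not a matter of choosing a cleverer coupling inside the per-coordinate entropy comparison: the paper exhibits an explicit two-point distribution ($\Pr(p=1)=a$, $\Pr(p=b)=1-a$ with $b\sim 0.3295$, $a\sim 0.0789$, expectation $\sim 0.38234$) for which both the independent-sampling term and Sawin's dependent-sampling term achieve \emph{equality} with $\Exp[H(p)]$, so any argument that only uses the conditional probabilities $p_i$ (and statistics of a coupling expressible through them) is blocked strictly below $\tfrac12$. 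Saying that ``once the right coupling is identified, the remaining algebra should follow by routine convexity analysis'' assumes away exactly the part that is open.

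There is also a concrete technical flaw in the coupling you sketch. If you draw a seed $C$ and then sample $A$ and $B$ from $\F$ conditioned on containing $C$, the marginal law of $A$ is biased toward large sets and is no longer uniform on $\F$; but the contradiction $H(A\cup B)\le \log_2\abs{\F}=H(A)$ requires $H(A)=\log_2\abs{\F}$, i.e.\ exact uniformity of $A$. The ``Radon--Nikodym correction absorbed into the entropy calculus'' is not an argument: reweighting to restore uniformity of $A$ changes the law of $A\cup B$ and reintroduces the very loss you were trying to remove. The paper's actual mechanism is different: it samples $A$ and $C$ element-wise from conditional probabilities so that \emph{both marginals stay exactly uniform}, couples them through a single uniform $x\in[0,1]$ so that $\Pr[(A\cup C)_i\mid A_{<i},C_{<i}]=\max\{p_i,r_i,\min(p_i+r_i,1/2)\}$, and then takes a fixed convex combination with the independent-sample term; optimising that combination is what yields $c\sim 0.3823455$, and the matching upper-bound construction shows this is the limit of the method, not an artifact of insufficient optimisation.
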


This would be tight by taking all subsets of a fixed ground set. Indeed, if $\F=2^{[m]},$ then every integer $i \in [m]$ appears in exactly $2^{m-1}$ sets of $\F.$ Possibly, these are essentially the only tight examples (duplicating elements does not change the structure), as also suggested in the blog on the Polymath project; \url{https://gowers.wordpress.com/2016/01/21/}. 

According to~\cite{BB14, BBE13, FT18}, the union-closed conjecture was already a folklore conjecture since the late 1960s or beginning 1970s, and was made well-known by Frankl, who rediscovered it in the late 1970s (1979 according to~\cite{F95}), and Ron Graham. Nonetheless, the first formal publication containing it might be~\cite{rival85}.
In the $90$s, it was proven in~\cite{Knill94,Wojcik99} that there is an element that appears in at least a $\Omega \left( \frac{ \log \abs \F }{\abs \F}\right)$ fraction of all sets.
The latter result is also implied by the union-closed size problem by Reimer~\cite{Reimer03}, which was fully resolved in~\cite{BBE13}.
In contrast to the union-closed size problem, which was solved in $10$ years, the union-closed conjecture is still open.
It has been proven in various specific cases, e.g. for certain random generated union-closed families it has been proven to be true with high probability~\cite{BB14}.
More on the history till $2015$, with other equivalent formulations of the conjecture, can be found in the survey~\cite{BS15}.
One equivalent formulation by~\cite{BCST15} states that every bipartite graph with at least one edge has at least one vertex in each bipartition class that belongs to at most half of the maximal independent sets.
At the beginning of $2017$, Karpas~\cite{Karpas17} proved the union-closed conjecture for families that contain roughly at least half of all sets.

Very recently, Gilmer~\cite{Gilmer22} proved the first linear bound using an elegant entropy-based method. 
As such he resolved the $\eps$-Union-Closed Sets Conjecture, as stated in~\cite{Hu17}.
Gilmer claimed that a tight version of his method could prove a fraction equal to $\frac{3-\sqrt 5}{2},$ which was soon verified by~\cite{AHS22,CL22,Sawin22}.
The tight version heavily depends on determining the minimum of a function $\frac{h(x^2)}{x h(x)}$, where $h$ is the binary entropy function.
Chase and Lovett~\cite{CL22} gave a clear, short proof using the minimum determined by~\cite{AHS22}. 

A question and conjecture of Gilmer were soon answered in the negative, by~\cite{Sawin22, Ellis22}.
By working with approximate union-closed families in~\cite{CL22}, $\frac{3- \sqrt 5}{2}$ seemed possibly the best constant one could aim for with the idea of Gilmer~\cite{Gilmer22}.
Nevertheless, as suggested by Sawin~\cite{Sawin22}, this is not the case.
For this, we address the following question, from which the improved constant can be concluded later.
Note the inequality cannot be strict, since for any $\{0,1\}$-valued random variable equality does hold. 
\begin{question}[Sawin]\label{ques:Sawin}
    What is the maximum value $c$ for which there exists an $\a \in [0,1]$ such that the following is true?
    For every $p,q,r$ identically distributed $[0,1]$-valued random variables with expectation less than $c$, where $p$ and $q$ are independent, but $p$ and $r$ are not necessarily independent, we have 
    \begin{equation}\label{eq:Sawin}
        (1-\a)\Exp[ H(p+q-pq)] + \a\Exp\left[H\left(max\left(p,r,\min\left(p+r,1/2\right)\right)\right)\right] \ge \Exp[H(p)].
    \end{equation}
\end{question}
Yu~\cite{Yu22} considered the approach and question of Sawin in larger generality and derived bounds expressed in general optimisation forms. 

Our contribution consists in solving Question~\ref{ques:Sawin} exactly and as such improving the constant for which the union-closed conjecture is true.
The core content is written in Section~\ref{sec:possiblebounds}. Here, we start with explaining the entropy approach in general.
After that, we give some intuition why the bound can be improved despite the sharpness of the approximate union-closed conjecture by~\cite{CL22} and why the direct use of Sawin's idea cannot improve the constant too much.
For this, we provide the upper bound for $c$ in Question~\ref{ques:Sawin}, which later will turn out to be sharp.
As a last part of this section, in Subsection~\ref{subsec:sumproof}, we summarise the additional steps of the proof for the improved constant.

In the next section, Section~\ref{sec:betterconstant}, we prove the best bound of $c$ in Question~\ref{ques:Sawin}.
In Subsections~\ref{subsec:reduction} and~\ref{subsec:redSup_to4}, we reduce the possible probability distributions one has to consider and prove that the critical probability distributions have a support containing at most $3$ values.
This is as such the technical core to work out Sawin's idea and to answer Question~\ref{ques:Sawin}.
After this, we can express the problem as a minimisation problem of a function in $4$ variables and the optimisation problem can be verified with a computer-verification. 
In contrast to the detailed work in e.g.~\cite{AHS22} for the constant $\frac{3-\sqrt 5}{2}$, this is done slightly less rigorous. 
We need to take into account that the minimisation problem finds a local minimum and there is a finite computer precision involved.
By considering some plots, we note that there are two regimes to verify.
By observing that the extremal probability distribution is atomic (support has $2$ elements) in one regime,
we prove it more precisely for that regime and conclude.
In~\cref{subsec:ver_combinedideas}, we give a precise confirmation by combining our ideas with the strategy of Yu~\cite{Yu22}. As such, we can reduce the verification of~ \cref{ques:Sawin} to a minimisation problem in two variables, which can be solved numerically with graphical confirmation. The graphical confirmation gives information on the behaviour on local and global minima, which in principle is not the case in our previous strategy (3 variables), Yu~\cite{Yu22} (4 variables) and Liu~\cite{Jingbo23} (9 variables for a slightly further improved constant).
Since the constant is not $\frac 12$ and another core method will be needed for the full resolution, an even more rigorous analysis than has been done 
is unnecessary, as it does not contribute to further understanding the underlying principles.
Finally, in Section~\ref{sec:proof} we summarise the proof for the improved constant on the union-closed conjecture, based on the answer for Question~\ref{ques:Sawin}. That is, for $c\sim 0.3823455$ we prove the following theorem.
\begin{theorem}\label{thr:improvement}
    Let $\F \subset 2^{[n]}$ be a nonempty union-closed family.
    Then there is some element $i \in [n]$ that appears in at least $c\abs{\F}$ many sets of $\F.$
\end{theorem}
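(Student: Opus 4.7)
The plan is to run Gilmer's entropy argument augmented with Sawin's dependent sampling, and to close it using the resolution of Question~\ref{ques:Sawin} established in Section~\ref{sec:betterconstant}. Suppose for contradiction that $\abs{\F(i)} < c\abs{\F}$ for every $i \in [n]$. Draw $A$ and $B$ independently and uniformly from $\F$, and introduce a third $\F$-valued random variable $C$, again uniformly distributed on $\F$ but produced via a Sawin-type coupling with $A$. Intuitively, $C$ is sampled so that, conditionally on the history, the coordinate-wise conditional probabilities $p_i := \Pr[A_i = 1 \mid A_{<i}]$ and $r_i := \Pr[C_i = 1 \mid A_{<i}]$ realise the joint distribution witnessing the second summand of~\eqref{eq:Sawin}; concretely one arranges this by biasing the draw of $C$ towards sets containing (a selected portion of) $A$, so that $(A \cup C)_i$ collapses to $\max(A_i, C_i)$ in a useful way.

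Union-closedness yields $A \cup B, A \cup C \in \F$, hence $H(A \cup B), H(A \cup C) \le \log\abs{\F} = H(A)$. Expand each of these entropies via the chain rule along coordinates. Writing $q_i := \Pr[B_i = 1 \mid B_{<i}]$, the independence of $A$ and $B$ makes $(p_i, q_i)$ identically distributed and independent, and $H(A) = \sum_i \Exp[H(p_i)]$. The coordinate contribution $H((A \cup B)_i \mid (A \cup B)_{<i})$ is lower bounded by $\Exp[H(p_i + q_i - p_i q_i)]$, since conditioning on the coarser $(A \cup B)_{<i}$ only raises entropy compared with conditioning on the richer $(A_{<i}, B_{<i})$. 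The analogous step for $A \cup C$, together with the coupling and a case split on whether $r_i \ge p_i$, produces the $\max(p_i, r_i, \min(p_i + r_i, 1/2))$ term.

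Finally, invoke Question~\ref{ques:Sawin}: since $\Exp[p_i] = \abs{\F(i)}/\abs{\F} < c$ for all $i$, the inequality~\eqref{eq:Sawin} holds coordinate-wise at the optimal weight $\a$ produced in Section~\ref{sec:betterconstant}. Summing over $i$ gives $(1-\a)\, H(A \cup B) + \a\, H(A \cup C) \ge H(A) = \log\abs{\F}$, which together with $H(A \cup B), H(A \cup C) \le \log\abs{\F}$ forces equality throughout; a standard equality-case analysis then rules out every non-trivial union-closed $\F$, delivering the contradiction. The hard part is engineering the coupling of $C$ so that the $\max(p, r, \min(p+r, 1/2))$ expression genuinely arises as the conditional distribution of $(A \cup C)_i$ given the history rather than merely as an upper bound. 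This is exactly Sawin's innovation, and turning his abstract $[0,1]$-valued inequality into a coupling realisable inside an actual union-closed family, and then plugging in the tight constant from Section~\ref{sec:betterconstant}, is what lets one push past the barrier $\frac{3-\sqrt 5}{2}$.
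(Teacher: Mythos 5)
Your proposal follows the same route as the paper: assume every element lies in fewer than $c\abs{\F}$ sets, draw $A,B$ independently and uniformly, add a coupled uniform sample $C$, expand entropies coordinate-wise via the chain rule and data processing, and close with the resolved Question~\ref{ques:Sawin}. However, the step you yourself flag as ``the hard part'' --- constructing the coupling --- is left undone, and the intuition you offer for it points the wrong way. The paper realises the coupling by drawing one uniform $x\in[0,1]$ per coordinate and setting $A_i=[x<p_i]$, with $C_i=[x<r_i]$ when $\max\{p_i,r_i\}>1/2$ but $C_i=[1/2-r_i<x<1/2]$ when $p_i,r_i\le 1/2$; in the main regime $C_i$ is made as \emph{anti}-correlated with $A_i$ as possible, so that $\Pr[(A\cup C)_i=1\mid A_{<i},C_{<i}]=\min(p_i+r_i,1/2)$, pushing the conditional probability of the union towards $1/2$ and hence maximising entropy. ``Biasing $C$ towards sets containing a portion of $A$'' would do the opposite and make $A\cup C$ close to $C$, gaining nothing. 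Relatedly, $r_i$ must be the conditional probability given $C_{<i}$ (the fraction of sets of $\F$ agreeing with $C$ on $[i-1]$ that contain $i$), not given $A_{<i}$ as you wrote: that is what makes $C$ uniform on $\F$ via the product rule and hence makes $p_i$ and $r_i$ identically distributed, as Question~\ref{ques:Sawin} requires. The relevant case split is on whether $\max\{p_i,r_i\}>1/2$, not on whether $r_i\ge p_i$.

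The second gap is the endgame. You derive only $(1-\a)H(A\cup B)+\a H(A\cup C)\ge H(A)$ and defer to ``a standard equality-case analysis.'' That analysis is neither supplied nor standard: equality in Equation~\ref{eq:Sawin} genuinely occurs (for instance for $\{0,1\}$-valued variables), so something specific must break it. The paper does this by assuming without loss of generality that element $1$ belongs to some set of $\F$; then $p_1$ is a \emph{deterministic} value in $(0,c)$, for which the coordinate-$1$ inequality is strict, yielding $(1-\a)H(A\cup B)+\a H(A\cup C)>H(A)=\log_2\abs{\F}$ and an immediate contradiction with $H(A\cup B),H(A\cup C)\le\log_2\abs{\F}$. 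You should either include this strictness argument at the first coordinate or spell out the equality analysis you have in mind; as written, the contradiction is not yet reached.
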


So to recap, in this paper, we prove that by taking a linear combination of probability distributions, not all of them being independent, the bound on the union-closed conjecture derived with the entropy approach of Gilmer~\cite{Gilmer22} can be improved slightly. 

\subsection{Terminology and notation}\label{subsec:not&def}

In this subsection, we collect some basic notation and definitions used in extremal set theory and the entropy method used in the papers related to the work of Gilmer.
For more of this, we refer to~\cite{FT18} for extremal set theory and~\cite{CT06} for entropy.

The standard finite set of cardinality $n$ is denoted with $[n]=\{1,2,\ldots, n\}.$
A subset of $[n]$ is a set containing some elements from $[n]$ and can possibly be the empty set $\emptyset.$
A collection of subsets of $[n]$ is called a set system or family $\F$.
The family $2^{[n]}$ (the power set of $[n]$) contains all $2^n$ possible subsets of $[n]$ and in general a family $\F \subset 2^{[n]}$  is a subset of this power set.
A uniform family is a family whose sets all have the same cardinality $k.$
The largest $k$-uniform family on the ground set $[n]$ is $\binom{[n]}{k}=\{ A \subset [n] \colon \abs{A}=k\}.$
Similarly we use $\binom{[n]}{\ge k}$ to denote $\{ A \subset [n] \colon \abs{A}\ge k\}.$

A family $\F$ is called \textbf{union-closed} if for every $A, B \in \F$ also the union $A \cup B \in \F.$
Using $\F \cup \F=\{ A \cup B \colon A,B \in \F\},$ a family $\F$ is union-closed if and only if $\F \cup \F=\F.$
It is approximate union-closed if the latter is true for almost every choice of $A, B \in \F.$
The sets containing a fixed element $i$ are essentially presented by the family $\F(i)=\{A \backslash \{i\} \colon i \in A \in \F\}.$
Similarly, $\F(\overline i)=\{A \colon i \not \in A \in \F\}.$

We will use some Landau-notation.
Given two functions $f,g \colon \mathbb R \to \mathbb R$, we write
\begin{itemize}
    \item $f=o(g)$ if $\lim_{x \to \infty} f(x)/g(x)=0,$
    \item $f \sim g$ if $\lim_{x \to \infty} f(x)/g(x)=1.,$
    \item $f=O(g)$ if there is a constant $C$ such that $\lim_{x \to \infty} \abs{f(x)/g(x)}<C,$
    \item $f=\Omega(g)$ if $g=O(f),$
    \item $f=\Theta(g)$ if $g=O(f)$ and $f=O(g).$
\end{itemize}

A random variable $X$ can be related with the probability distribution of the outcomes.
The entropy $H(X)$ of a discrete random variable $X$ equals the Shannon entropy of its probability sequence and is denoted by $H(X).$
The support of a probability distribution or random variable, is the subset of the possible outcomes which have positive probability (density function).
If the support of $X$ is a finite set $A$, and each outcome $x \in A$ has a probability $p_x,$ then 
$$H(X)=-\sum_{x \in A} p_x \log_2 p_x.$$
It is a fundamental result, a corollary of Jensen's inequality, that this is bounded by 
$\log_2 \abs{A}.$
For a random variable with only two outcomes, occurring with probabilities $p$ and $1-p$, we denote the entropy with the binary entropy function $h(p)=-( p\log_2 p + (1-p) \log_2(1-p)).$\footnote{For clarity, the binary entropy function is denoted with $h$ and the entropy function of a variable with $H$.}
A conditional entropy of a random variable $Y$ given $X$ can be computed as
$$H(Y|X)\ =-\sum _{x\in X,y\in Y} \Pr(x,y)\log_2 {\frac {\Pr(x,y)}{\Pr(x)}}.$$
Here $\Pr(x,y)$ is the probability on the outcome $\{X=x, Y=y\}.$
The expectation of a random variable $X$ is $\Exp[X]= \sum_{x \in A} p_x x.$
We use $\vee$ for the logical or, that is $x\vee y$ implies that $x$ or $y$ has to be satisfied.

\section{Preliminaries}\label{sec:possiblebounds}

In this section, we begin by presenting the main idea behind the entropy approach introduced by Gilmer~\cite{Gilmer22}. We then provide some intuition to explain why the examples from~\cite{Gilmer22, CL22}, which initially suggest that the constant $\frac{3-\sqrt{5}}{2}$ cannot be improved using this method, can actually be refined. Additionally, we (try to) demonstrate why the linear combination in Question~\ref{ques:Sawin} is somewhat necessary\footnote{i.e., variants will not be simpler} and argue that the constant cannot be significantly improved by solving this question\footnote{E.g.~\cite{Jingbo23} gave a variant with tiny improvement}. Finally, we summarise the key insights behind the proof. More examples and explanations can also be found in the survey~\cite{Cambie23}.

\subsection{Why the entropy approach works for the union-closed conjecture}

The contraposition of the union-closed conjecture states that if every element $i \in [n]$ appears in strictly less than half of the sets of a family $\F \subset 2^{[n]}$, then $\F$ is not union-closed and thus $\abs{\F} < \abs{\F \cup \F}.$
The idea behind the entropy approach initiated by Gilmer~\cite{Gilmer22} is that if one can find a way to sample sets from $\F \cup \F$ such that the entropy is strictly larger than $\log_2 \abs{\F},$ one can conclude that $\abs{\F} < \abs{\F \cup \F}.$ The latter since the entropy of a random variable with $N$ possible outcomes is bounded by $\log_2 N.$
Proving that $H(A \cup B) >\log_2 \abs \F$ where $A,B$ are sampled from $\F$, is hard when lacking information on $\F.$ It is easier to compare $H(A \cup B)$ with $H(A)$. For the conclusion to hold, $A$ needs to be sampled uniform random from $\F$.
Gilmer~\cite{Gilmer22} did this by taking two uniform independent (iid) random samples $A,B$ from $\F$ and considering the entropy of the union $A \cup B.$ 

For alternatives, one could sample $B$ non-uniform from $\F$, or have two sampled random variables which are not independent. In that case one tries to do this in such a way that $A \cup B$ is as uniformly distributed over $\F \cup \F$ 
as possible.
Exactly uniformly distributed seems impossible.
If $\F$ would be union-closed and $A$ is uniform random, $A \cup B$ needs to be equal to $A$ to ensure that $A \cup B$ is uniformly distributed as well.
As a concrete example, when $\F= \binom{[2]}{\ge 1},$ then $\Pr(A=\{1\}) \Pr(B=\{1\})=\frac 13 = \Pr(A=\{2\}) \Pr(B=\{2\})$ is impossible.
Once one is sampling dependent samples, one would need additional ideas to know more about the conditional probability distributions.
Sawin~\cite{Sawin22} gave the most natural choice when sampling the sets element-wise.

\subsection{Observations on approximate union-closed set system}

Let $\psi=\frac{3-\sqrt 5}{2}$ be the smallest root of $2x-x^2=1-x \Leftrightarrow x^2-3x+1=0$ and $g \colon \mathbb N \to \mathbb N \colon n \mapsto g(n)$ be a function which is both $o(n)$ and $\omega(n^{0.5}).$
In~\cite{CL22}, the authors take $g(n) \sim n^{2/3}.$
The family 
$$
\F=\{ A \subset [n] \colon \abs{A} =\psi n +g(n) \vee \abs{A} \ge (1-\psi) n\}\\
=\binom{[n]}{\psi n +g(n)} \cup \binom{[n]}{\ge (1-\psi) n}
$$ is an approximate union-closed family.
That is, the union of two sets (iid, independent identically distributed, uniform random chosen) in $\F$ belongs with high probability to $\F$ as well.
Nevertheless, since $\psi > \frac{1}{3}$, one can also observe that $\F \cup \F = \binom{[n]}{\geq \psi n + g(n)}$. That is, only a small proportion of $\F \cup \F$ belongs to $\F$. As such, this construction, which might suggest that improving the constant $\psi$ is impossible with the entropy approach, does not necessarily imply this conclusion. If unions of two sets in $\F$ are taken with a non-uniform measure or in a dependent manner such that sets with smaller unions are more likely to be selected, the initial argument about the sharpness of this construction no longer holds. The entropy of the union of two random variables, when these variables take values in $\F$ in a dependent way, can indeed be larger. With the dependency proposed by Sawin, the union of two sets from the approximate union-closed family mentioned above will almost surely have a size of $(0.5 + o(1))n$, for example.

\subsection{Upper bound for Question~\ref{ques:Sawin} and limits on the approach of Sawin}\label{subsec:uppbound}

Let $h(x)=-x \log_2(x)-(1-x)\log_2(1-x).$
Let the roots of the function $h(x)(2-h(x))-h(2x-x^2)$ be $0<b_1<b_2<1.$
Then $b_1 \sim 0.139499451909862$ and $b_2 \sim 0.329454738503037 .$
Choose $b=b_2$ and $a=\frac{1-h(b)}{2-h(b)}\sim 0.0788772927059232.$

Let $p,q,r$ be three identically distributed $[0, 1]$-valued random variables, where 
$\Pr(p=1)=a$ and $\Pr(p=b)=1-a,$ such that $p$ and $q$ are independent and $p$ and $r$ are as negatively correlated as possible in the sense that 
$\Pr(p=r=1)=0.$
For these choices, we have $\Exp[p]=0.382345533366703$\footnote{The computation can be found in \url{https://github.com/StijnCambie/UCconjecture/blob/main/Sharpness.sagews}} and $(1-a)^2 h(2b-b^2)=(1-2a)=(1-a)h(b)$ which is equivalent with
$$\Exp[H(p+q-pq)]=\Exp\left[H\left(max\left(p,r,\min\left(p+r,1/2\right)\right)\right)\right] =\Exp[H(p)].$$
Hence no linear combination satisfies Equation~\ref{eq:Sawin} with a strict inequality and local perturbations (increasing $a$) will result in counterexamples when $c$ is allowed to be slightly larger.

On a different note, we observe that taking a linear combination, as done in Equation~\ref{eq:Sawin}, will be necessary to improve the constant $\frac{3-\sqrt{5}}{2}$ in the progress on the union-closed conjecture. To see this, observe that $\Exp\left[H(\max(p, r, \min(p+r, 1/2)))\right] < \Exp[H(p)]$ for $p$ and $r$ identically distributed with $\Pr(p = r = 1) = 0$, $\Pr(p = b) = 1 - a$, and $\Pr(p = 1) = a$, where $b = \frac{1}{4}$ and $a = \frac{1 - h(b)}{2 - h(b)} + \epsilon$ for some small $\epsilon > 0$. Since $\Exp[p] < 0.37 < \frac{3-\sqrt{5}}{2}$, considering the single term alone would not lead to an improvement.

From these observations, one can conclude that one cannot aim to prove the result with a constant better than $0.382345533366703$ with the exact suggested approach of Sawin.

\subsection{Summary of the proof}\label{subsec:sumproof}

The idea from Sawin~\cite{Sawin22} is to sample sets twice element-wise. Here iteratively, one samples $A \cap [k]$ and $B \cap [k]$ for $0 \le k \le n$ based on the probability that a set in $\F$, given the intersection with $[k]$, would contain the additional element $k+1$.
The sampling of the element $k+1$ for $A$ and $B$ can then be done in a dependent manner, ensuring that both $A$ and $B$ are uniform samples over $\F.$
Once some elements are sampled, control over the conditional probabilities (the distribution) is lost, so we assume the worst-case scenario. Since the worst-case scenarios for the two different strategies differ, a better bound is obtained by taking a linear combination of these two different ways of sampling in $\F \cup \F.$
As the entropy of the whole sample can be determined by summing (conditional) entropies for every element $k \in [n]$, it is sufficient to prove the inequality for these entropies for for a single element.
At this point, the problem reduces to a question that depends purely on the probability distribution of random variables and their expectations.

To attack that question, we perform local optimisation to find properties of an optimal distribution by redistributing the probability mass function and verifying convexity and concavity. As such, we reduce the problem to a simpler inequality involving only $4$ unknown parameters associated with a probability distribution whose support contains at most $3$ elements. 
Combining with the work of Yu~\cite{Yu22}, this is even further improved to two cases depending on two variables each.
The final minimisation problems are verified numerically with a computer program in two ways, together with a plot showing that we obtain the global minimum by the proposed atomic probability distribution.

\section{Proof for the optimal constant of Question~\ref{ques:Sawin}}\label{sec:betterconstant}

In this section, we prove that the maximum constant $c$ for Question~\ref{ques:Sawin} is approximately $0.382345533366$ (the value derived in Subsection~\ref{subsec:uppbound}).
We do so by proving it for the optimal choice of $\a$, $\a \sim 0.0356069$.
The latter is obtained from comparing derivatives of the atomic solutions with support on $\{x,1\}$ and expectation $c$ around $x=b$.

That is, the probability $p$ is given by $\Pr(p=1)=\frac{c-x}{1-x}$ and $\Pr(p=x)=1-\frac{c-x}{1-x}$.
Remembering that $p,r$ are negatively correlated, we let 
\begin{align*}
    g_1(x)&=\Exp[ H(p+q-pq)] - \Exp [ H(p)]= \Pr(p=x)^2 h(2x-x^2)-  \Pr(p=x) h(x) \mbox{ and }\\    g_2(x)&=\Exp\left[H\left(max\left(p,r,\min\left(p+r,1/2\right)\right)\right)\right]- \Exp[H(p)]\\&= \left(1-2\Pr(p=1)\right) -  \Pr(p=x) h(x)
\end{align*}
Then~\cref{eq:Sawin} is equivalent with $(1-\a)g_1(x)+\a g_2(x) \ge 0$ and thus $\a$ need to satisfy $(1-\a)g'_1(b)+\a g'_2(b)=0$ or equivalently $\a= \frac{ g'_1(b)}{g'_1(b)-g'_2(b)}.$

We first show that it suffices to consider the case where the probability distributions $p, q, r$ are not supported on $(0.5,1).$
Next, by performing analytical computations on the behaviour of the functions in two intervals, similar to what Sawin~\cite{Sawin22} did, we reduce the problem to finding the minima of a continuous function in three variables on a bounded region, and subsequently to two variables.
As such, the remaining problem is a minimisation problem for which the statement can be exactly verified with the help of a computer.

\subsection{Reduction of support of the probability distribution}\label{subsec:reduction}

First we prove that it is sufficient to consider $[0, 1]$-valued random variables which do not attain values in $(0.5,1).$ 

For readability and since it is sufficient to consider finite supported measures for the application on Conjecture~\ref{conj:UC}, we prove the following lemmas only in the case of discrete probability distributions.
The proof of the following lemma can be modified for general probability distributions by replacing sums by integrals and probability $\Pr$ by probability distribution $\mu$.

\begin{lemma}\label{lem:red1}
    Assume $p,q$ are independent identically distributed (iid) $[0,1]$-valued random variables with expectation $\Exp[p]=c \le 0.39$ such that $\Pr(p=y)>0$ for some $1/2<y<1$.
    Then the modified common probability distribution $p',q'$ of $p,q$ for which $\Pr(p'=1)=\Pr(p=1)+(2y-1)\Pr(p=y), \Pr(p'=y)=0, \Pr(p'=0.5)=\Pr(p=0.5)+(2-2y)\Pr(p=y)$ and $\Pr(p'=y')=\Pr(p=y')$ for every $y'\in [0,1]\backslash\{0.5,y,1\}$ satisfies
    \begin{itemize}
        \item $\Exp[p']=\Exp[p]$ and
        \item $w\Exp[H(p')]-\Exp[H(p'+q'-p'q')]>w\Exp[H(p)]-\Exp[H(p+q-pq)]$ for every $w \le 1.044$, where $p',q'$ are iid.
    \end{itemize}
\end{lemma}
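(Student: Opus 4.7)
The plan is to establish $L(\mu') - L(\mu) > 0$, where $L(\mu) := w\,\Exp[h(p)] - \Exp[h(p+q-pq)]$ for $p,q$ iid with law $\mu$. Write the modification as $\mu' = \mu + s\nu$ with signed measure $\nu = (2y-1)\delta_1 + (2-2y)\delta_{1/2} - \delta_y$; since $\int x\, d\nu = (2y-1) + \tfrac{1}{2}(2-2y) - y = 0$, the expectation claim $\Exp[p'] = \Exp[p]$ is immediate. Expanding the bilinear second term of $L$ (using symmetry of $(p,q) \mapsto h(p+q-pq)$) gives
\[
L(\mu') - L(\mu) \;=\; ws\bigl(2-2y-h(y)\bigr) \;-\; 2s\,A(\mu) \;-\; s^2 B(y),
\]
where $-A(\mu) = \Exp_\mu[\phi(q)]$ with $\phi(q) := h\bigl(y + q(1-y)\bigr) - (2-2y)\,h\bigl(\tfrac{1+q}{2}\bigr)$, and $B(y) := (2-2y)^2 h(\tfrac{3}{4}) - 2(2-2y)\,h(\tfrac{1+y}{2}) + h(2y-y^2)$.

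Next I would establish two pointwise properties of $\phi$. With $u = y+q(1-y)$ and $v = \tfrac{1+q}{2}$, the algebraic identity $u(1-u) - 2(1-y)\,v(1-v) = (1-y)(1-q)^2(y-\tfrac{1}{2})$ together with $h''(x) = -\tfrac{1}{x(1-x)\ln 2}$ yields
\[
\phi''(q) \;=\; \frac{(1-y)^2(1-q)^2(y-\tfrac{1}{2})}{2(\ln 2)\,u(1-u)\,v(1-v)} \;\ge\; 0,
\]
so $\phi$ is convex on $[0,1]$ whenever $y \in [\tfrac{1}{2},1]$. Likewise $\phi'(q) = (1-y)[h'(u) - h'(v)] \le 0$, because $u \ge v$ on that range and $h'$ is decreasing; hence $\phi$ is non-increasing with $\phi(1) = 0$ and $\phi(0) = h(y) - (2-2y) > 0$ by strict concavity of $h$. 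Jensen's inequality and monotonicity then combine to give $-A(\mu) = \Exp_\mu[\phi(q)] \ge \phi(\Exp_\mu[q]) \ge \phi(0.39)$.

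A separate one-variable calculus step shows $B(y) \le 0$ on $[\tfrac{1}{2},1]$: $B$ vanishes at both endpoints, and Taylor expansion near each one yields a strictly negative quadratic leading coefficient. Combined, $L(\mu') - L(\mu) \ge s\bigl[2\phi(0.39) - w(h(y) - (2-2y))\bigr]$, and the lemma reduces to verifying the univariate inequality $D(y) := 2\phi(0.39) - w(h(y)-(2-2y)) > 0$ on $(\tfrac{1}{2}, 1)$ for every $w \le 1.044$. Since $D(\tfrac{1}{2}) = D(1) = 0$, asymptotics at the endpoints give the limiting ratios $2\phi(0.39)/(h(y)-(2-2y)) \to (1-c)h'(\tfrac{1+c}{2}) + 2h(\tfrac{1+c}{2}) \approx 1.0499$ as $y \to \tfrac{1}{2}$, and $\to 2(1-c) = 1.22$ as $y \to 1$ (with $c = 0.39$), both strictly exceeding $1.044$; the remaining interior is confirmed by analysing the sign of $D'$. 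This final univariate step is the main technical obstacle, since the margin of $D$ is of order $10^{-3}$ for $y$ close to $\tfrac{1}{2}$ and requires careful Taylor control to remain rigorous.
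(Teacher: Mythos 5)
Your argument is correct and, at its core, identical to the paper's: your $\phi(q)=h(y+q(1-y))-(2-2y)h\bigl(\tfrac{1+q}{2}\bigr)$ is exactly the negative of the function $g$ the paper differentiates, your convexity identity $u(1-u)-2(1-y)v(1-v)=(1-y)(1-q)^2(y-\tfrac12)$ checks out and reproduces the paper's sign computation for $g''$, the Jensen-plus-monotonicity step is the same, and your final inequality $D(y)=2\phi(0.39)-1.044\,\phi(0)>0$ is literally the inequality $2g(0.39)-1.044\,g(0)<0$ that the paper delegates to a computer verification (your endpoint limits $1.0499$ and $1.22$ are right and explain why $1.044$ is the natural threshold). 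The one genuine structural difference is how the mass transfer is organized. The paper moves the mass in $N$ increments of size $\eps$ and only ever needs the first-order (linear in $\eps$) change, which is bounded by $2g(c)$ uniformly along the path because the expectation is preserved at every step; the quadratic term never appears. You instead expand the bilinear functional exactly in one shot, which forces you to control the cross term $B(y)=\Exp_{\nu\times\nu}[h(p+q-pq)]$ and to prove $B(y)\le 0$ on $[\tfrac12,1]$. That claim is true, but it is delicate: $B$ vanishes to second order at $y=\tfrac12$ (numerically $B(y)\approx -0.26\,(y-\tfrac12)^2$ there) and stays below roughly $-3\cdot 10^{-3}$ in absolute value in the interior, and your justification --- endpoint Taylor expansions with negative leading coefficients --- only covers neighbourhoods of $\tfrac12$ and $1$, not the whole interval. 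So your route trades the paper's iteration trick for an additional univariate inequality that must itself be verified globally (by the same kind of computation the paper already uses for $D>0$); the paper's incremental argument buys you the luxury of never seeing $B$ at all, while yours buys an exact, one-line expansion of $L(\mu')-L(\mu)$.
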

\begin{proof}
The first part is immediate since the choice of redistribution is chosen in such a way that the following two linear combinations are true;
$(2y-1)+(2-2y)0.5=y$ and $(2y-1)+(2-2y)=1$. The latter to ensure that we still have a probability distribution.
Hence it remains to prove the second part.
For this, we first take a very small value $\eps=\frac{\Pr(p=y)}{N}$ by choosing a large positive integer $N$. Let $I$ be the support (set with all values $x$ for which $\Pr(p=x)>0$) with $1/2$ and $1$ included as well.

First, we do the redistribution of only an $\eps$-fraction in the probability distribution, that is
$\Pr(p'=1)=\Pr(p=1)+(2y-1)\eps, \Pr(p'=y)=\Pr(p=y)-\eps, \Pr(p'=0.5)=\Pr(p=0.5)+(2-2y)\eps.$
Now $\Exp[H(p'+q'-p'q')]-\Exp[H(p+q-pq)]$ is equal to $\eps \lambda + O(\eps^2)$, where $\lambda$ equals
\begin{align*}
    2 \sum_{x \in I} \left((2-2y) h(0.5(1-x)) - h((1-y)(1-x))\right)\Pr(p=x).
\end{align*}
Here we have used that $h(x+y-xy)=h((1-x)(1-y))$ (by symmetry of $h$ and $1-(x+y-yx)=(1-x)(1-y)$) and $h(0)=0$.
Let $g(x)=(2-2y) h(0.5(1-x)) - h((1-y)(1-x)).$
Note that 
\begin{align*}
\ln 2 \frac{d}{dx} g(x)&= (y-1)\ln \left( \frac{(1-y)(1+x)}{x+y-xy} \right)> 0 \mbox{ and}\\
\ln 2 \frac{d^2}{dx^2} g(x)&= -\frac{(1-y)(2y-1)}{(x+1)(x+y-xy)}<0
\end{align*}
since $0<(1-y)(1+x)<x+y-xy$ for $1 >y>0.5$ and every $1\ge x\ge 0.$
Due to Jensen's inequality for the concave function $g$, $\lambda=2\Exp[g(p)]$ is upper bounded by
$2g(c).$
This upper bound is independent of $\Pr(p=y).$
Hence we can do this $N$ times and conclude that for $p', q'$ distributed as in the lemma, we have
$\Exp[H(p'+q'-p'q')]-\Exp[H(p+q-pq)]\le 2g(c) \Pr(p=y)+O(\eps).$
It is also straightforward to compute that 
$\Exp[H(p)]-\Exp[H(p')]=\Pr(p=y)\left(h(y)-(2-2y)h(0.5)\right)=-g(0)\Pr(p=y).$
Finally, it suffices to prove that 
$$2g(c)-g(0)<0$$ since then $\eps$ can be chosen sufficiently small such that after adding the $O(\eps)$ term, it is still negative.
Since $g$ is an increasing function and $g(0)<0$ (due to $h$ being concave), it suffices to prove that $2g(0.39)-1.044g(0)<0.$
This is the case for every $\frac 12 <y < 1.$\footnote{Verification at \url{https://github.com/StijnCambie/UCconjecture/blob/main/reduction\_UC.sagews}}
\end{proof}

\begin{lemma}\label{lem:red_part2}
Let $p,r$ be identically distributed $[0,1]$-valued random variables, not necessarily independent.
Then one can modify the underlying common probability distribution by distributing the probability mass function on $(0.5,1)$ over $0.5$ and $1$ such that $\Exp[p]$ is the same and $$\Exp\left[H\left(max\left(p',r',\min\left(p'+r',1/2\right)\right)\right)\right] \le \Exp\left[H\left(max\left(p,r,\min\left(p+r,1/2\right)\right)\right)\right].$$
\end{lemma}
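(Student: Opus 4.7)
The plan is to reduce the joint distribution of $(p,r)$ in two successive stages, using the concavity of the relevant entropy functional on $[1/2,1]$. I would first construct the target marginal $\mu'$ by taking the given common marginal $\mu$ of $p,r$ and, for each atom $y\in(1/2,1)$, redistributing its mass to $\{1/2,1\}$ with weights $(2-2y)$ and $(2y-1)$; these weights are chosen precisely so that $(2-2y)\cdot\tfrac12+(2y-1)\cdot1=y$, hence the mean is preserved. Then I would define $(p',r')$ via two coupling steps: first, conditional on $p=y\in(1/2,1)$, resample $p$ as a $\{1/2,1\}$-valued Bernoulli with the above weights, independently of $r$, keeping the rest of the joint distribution intact; second, apply the symmetric resampling to $r$ (now regarded as independent of $p'$ given $r=y$). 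Since $p$ and $r$ start identically distributed, both marginals end up equal to $\mu'$, so $p'$ and $r'$ are again identically distributed with the required support.

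The core analytic step is to show that the function
\[
F(x,y)\;:=\;H\!\left(\max\!\left(x,y,\min(x+y,\tfrac12)\right)\right)
\]
is concave in $x$ on $[1/2,1]$ for every fixed $y\in[0,1]$. A short case analysis suffices. If $y\le 1/2$ and $x\ge 1/2$, then $\min(x+y,1/2)=1/2\le x$ and so $F(x,y)=h(x)$, which is concave. If $y>1/2$, then again $\min(x+y,1/2)=1/2$ but now $\max(x,y,1/2)=\max(x,y)$, so $F(x,y)=h(y)$ on $[1/2,y]$ and $F(x,y)=h(x)$ on $[y,1]$; concavity on $[1/2,1]$ reduces to checking the joining point $x=y$, where the left derivative is $0$ and the right derivative is $h'(y)<0$, as required.

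With concavity in hand, the proof finishes by two applications of Jensen's inequality. In stage one, conditioning on $p=y$ for each $y\in(1/2,1)$ in the support, concavity of $F(\cdot,r)$ on $[1/2,1]$ gives
\[
(2-2y)\,F(\tfrac12,r)+(2y-1)\,F(1,r)\;\le\;F(y,r)
\]
pointwise in $r$; taking expectations and summing over such atoms yields $\Exp[F(p',r)]\le\Exp[F(p,r)]$. Stage two applies the same reasoning to $r$ (using symmetry $F(x,y)=F(y,x)$ and concavity in the second argument), producing $\Exp[F(p',r')]\le\Exp[F(p',r)]$. Chaining the two inequalities gives the statement of the lemma, and $\Exp[p']=\Exp[p]$ follows from the mean-preserving choice of weights.

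The main obstacle I anticipate is conceptual rather than computational: because $p$ and $r$ may be arbitrarily coupled, one cannot redistribute both coordinates simultaneously without tracking the joint law, so the \emph{sequential} reduction, together with the observation that concavity in the first argument implies the analogous statement in the second by symmetry, is what makes the argument work. The only calculus input is the slope comparison $0\ge h'(y)$ on $[1/2,1]$ to secure concavity of $F(\cdot,y)$ at the kink $x=y$, which is immediate from $h'(1/2)=0$ and $h''<0$.
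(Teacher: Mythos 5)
Your proof is correct, but it takes a genuinely different route from the paper's. The paper argues iteratively: it repeatedly takes the \emph{largest} atom $y\in(0.5,1)$ and spreads its mass over the next-largest such atom $y'$ (or $0.5$) and $1$, tracking the joint law case by case --- in particular it keeps the diagonal coupled, moving mass from $(y,y)$ to $(y',y')$ and $(1,1)$ and invoking concavity of $h$ there, while the off-diagonal cases $(y,1)$ and $(y,z)$ with $z\le y'$ are handled separately. You instead do a one-shot redistribution of each atom $y$ straight to $\{1/2,1\}$ with the mean-preserving weights $(2-2y,\,2y-1)$, performed \emph{sequentially and independently} in each coordinate, and the single analytic input is that $x\mapsto H(\max(x,y,\min(x+y,1/2)))$ is concave on $[1/2,1]$ for every fixed $y$ (your kink check $0\ge h'(y)$ at $x=y$ is the only nontrivial point, and it is right). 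Since the lemma only asks for \emph{some} joint law of $(p',r')$ with the redistributed marginal, your decoupled construction is admissible, and the resulting marginal agrees with the one produced by the paper's iteration (both preserve total mass and mean of the part supported on $(0.5,1)$, which pins the split over $\{1/2,1\}$ uniquely), so it also matches the marginal needed in Lemma~\ref{lem:red1}. What your version buys is a cleaner argument that avoids enumerating cases of the joint distribution and extends verbatim to non-atomic laws; what the paper's version buys is that the modified pair stays ``as diagonal as possible,'' which is closer in spirit to the worst-case coupling used later, though the lemma as stated does not require this.
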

\begin{proof}
    We do the following procedure as long as there is some value in $(0,1)$ with positive probability.
    Let $y=\max\{y \mid 0.5<y<1 \wedge \Pr(p=y)>0\}$ be the largest value in $(0,1)$ with positive probability and $y'$ the second largest such value, or $0.5$ if no other value in $(0,1)$ has positive probability.
    We distribute the probability mass function of $y$ over $y'$ and $1$ (such that we still end with a probability measure). 
    We let $p'$ and $r'$ be dependent as before, with the corresponding distribution taken into account (made clear below).
    We claim that the considered quantity $\Exp\left[H\left(\max\left(p,r,\min\left(p+r,1/2\right)\right)\right)\right]$ did not increase by doing so.
    If $\Pr(p=y,r=y)>0$,
    we increase $\Pr(p=y',r=y')$ and $\Pr(p=1,r=1)$ accordingly and conclude by concavity of $h$.
    If $\Pr(p=y,r=1)>0$, then $\max\{p,r,\min(p+r,1/2)\}=1$ both before and after the local adaptation of $p$ (similarly when $p$ and $r$ are switched) and so there is no change by this term.
    If $\Pr(p=y,r=z)>0$, for some $z\le y'$, then we conclude again by concavity of $h.$
    By iterating this process, the probability measure on $(0.5,1)$ is distributed over $0.5$ and $1$ and the condition in the lemma is satisfied.
\end{proof}

Now, assume there are random variables $p,q,r$ satisfying the conditions of Question~\ref{ques:Sawin} for which $\Exp[ H(p)]\le c$ and 
$(1-\a)\Exp[ H(p+q-pq)] + \a\Exp\left[H\left(max\left(p,r,\min\left(p+r,1/2\right)\right)\right)\right] \le \Exp[H(p)]$ for some $\a \in [0,1].$
\noindent Next, we consider the modified random variables $p',q',r'$,
where the probability distribution is iteratively adapted by distributing the probability $\Pr(p=y)$ for some $0.5<y<1$ over $\Pr(p=0.5)$ and $\Pr(p=1)$.
Since $1.044>\frac{1}{1-\a},$ Lemma~\ref{lem:red1} implies that 
$\Exp[H(p')]-(1-\a)\Exp[H(p'+q'-p'q')]>\Exp[H(p)]-(1-\a)\Exp[H(p+q-pq)]$.
Also $\Exp\left[H\left(max\left(p',r',\min\left(p'+r',1/2\right)\right)\right)\right] \le \Exp\left[H\left(max\left(p,r,\min\left(p+r,1/2\right)\right)\right)\right]$
for the natural choice of the adapted dependency of $p$ and $r$ by Lemma~\ref{lem:red_part2}.
Thus, if there are probability distributions $p,q$ and $r$ for which Equation~\ref{eq:Sawin} is not satisfied for some value of $c$, then these distributions must have support disjoint from $(0.5,1)$.

\subsection{Reduction to small support of the probability distribution}\label{subsec:redSup_to4}

In the previous subsection, we established that for~\cref{ques:Sawin}, it is sufficient to consider probability distributions whose support does not include values in $(0.5,1)$, and we now further restrict the support to at most three elements.

First, we observe that the quantity $\Exp\left[H\left(max\left(p,r,\min\left(p+r,1/2\right)\right)\right)\right]$ is minimised (under the condition that $p$ and $r$ have the same fixed distribution) when $\Pr(p=r=1)=0$.
If $\Pr(p=r=1)$ and $\Pr(p=x, r=y)>\epsilon>0$ for some values $0 < \max{x, y} < 1$, we modify the probability distribution by increasing $\Pr(p=x, r=1)$ and $\Pr(p=1, r=y)$ by $\epsilon$, and decreasing $\Pr(p=r=1)$ and $\Pr(p=x, r=y)$ by $\epsilon$. This decreases the expectation of the entropy function $\Exp\left[H\left(max\left(p,r,\min\left(p+r,1/2\right)\right)\right)\right]$.
Note that in the remaining case $\Pr(p=r=0)$ would be the only other positive probability and so the whole expectation $\Exp\left[H\left(max\left(p,r,\min\left(p+r,1/2\right)\right)\right)\right]=\Pr(p=r=0) h(0)+ \Pr(p=r=1) h(1)$ is zero. 
Similarly, we can make analogous modifications, decreasing $\Pr(p=r=1)$ and $\Pr(p=0, r=0)$ with $\eps=\Pr(p=r=1)$ and increasing $\Pr(p=0, r=1)$ and $\Pr(p=1, r=0)$ with $\eps$.
The latter is possible since we assumed $\Pr(p=1) \le \frac 12.$
Thus, without loss of generality, we may assume that $\Pr(p=r=1)=0$.

Now, by the result of the previous subsection,~\cref{subsec:reduction}, whenever $p,r<1,$ we have $p,r \le \frac 12$ and hence $max\left(p,r,\min\left(p+r,1/2\right)\right)=\min\left(p+r,1/2\right).$

For the remainder of this subsection, let $\Pr(p=1)=a$.
Define $x_0$ as the $(1-2a)$-quantile of $p$, i.e., the smallest value satisfying $\Pr(p \le x_0) \ge 1-2a.$

\begin{lemma}\label{lem:quantile_prcomparison}
    We can assume that $\Pr(p=r)=1-2a$ and this happens exactly for the $(1-2a)$-quantile $x_0$, that is, for every $x<x_0,$ we have $\Pr(p=r=x)=\Pr(p=x)$
and $\Pr(p=r=x_0)=1-2a-\Pr(p<x_0).$
\end{lemma}
\begin{proof}
To prove the statement, we show that probability mass can be redistributed without increasing the studied expectation.

Since $h$ is an increasing function on $[0,1/2]$, we can assume that the values $x,y \in [0,1/2]$ for which $\Pr(p=x,r=y)>0$ satisfy $x,y \le x_0$. In particular, for $x_0 < x \leq 1/2$ and $y_0 < y \leq 1/2$, we have the following condition: if $\Pr(p=x, r=z)>0$ or $\Pr(p=z, r=y)>0$, then $z=1$. This cancels the largest values in $[0,1/2]$, as their combination with $1$ results in a contribution of $0$ due to $h(1)=0$.

If $\Pr(p=x,r=y), \Pr(p=x',r=y')\ge \eps>0,$ where $x<x'<x_0\le 1/2$ and $1/2 \ge x_0 \ge y>y'$, we can decrease $\Pr(p=x,r=y)$ and $ \Pr(p=x',r=y')$ with $\eps$ and increase $\Pr(p=x,r=y'),\Pr(p=x',r=y)$ with $\eps.$
The studied expectation does not increase, by the follow claim.

\begin{claim}
    The function $g\colon [0,1] \to [0,1] \colon x \mapsto h(min(x,1/2))$ is a concave function.
    For all $x,x',y,y' \in [0,1/2]$ such that $x<x'$ and $y>y'$, we have
    $g(x+y) + g(x'+y') \ge g(x+y') +g(x'+y) .$
\end{claim}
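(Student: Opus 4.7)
The plan is to establish the two parts of the claim in sequence, noting that the four-point inequality will follow from concavity by a standard rearrangement argument.

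For concavity of $g$, I would split $[0,1]$ at $1/2$. On $[0,1/2]$ the function $g$ coincides with the binary entropy $h$, which is concave. On $[1/2,1]$, $g$ is identically equal to $h(1/2)=1$, hence trivially concave. It remains to verify concavity across the junction point $1/2$. A direct computation gives $h'(x)=\log_2((1-x)/x)$, which vanishes at $x=1/2$, matching the zero derivative of the constant piece. Since both pieces are concave and the one-sided derivatives agree at $1/2$, $g$ is continuously differentiable there, and the two concave pieces glue into a function that is concave on all of $[0,1]$.

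For the four-point inequality, I would set $a=x+y'$, $b=x+y$, $c=x'+y'$, $d=x'+y$. The hypotheses $x<x'$ and $y>y'$ force $a\le b$, $a\le c$, $b\le d$ and $c\le d$, together with the key identity $a+d=b+c$; moreover $x,y,x',y'\in[0,1/2]$ guarantees that all four values lie in $[0,1]$. Write $b$ and $c$ as complementary convex combinations of the extremes, namely $b=\lambda a+(1-\lambda)d$ and $c=(1-\lambda)a+\lambda d$ for some $\lambda\in[0,1]$ (such a $\lambda$ exists precisely because $b+c=a+d$). Concavity of $g$ then yields $g(b)\ge \lambda g(a)+(1-\lambda)g(d)$ and $g(c)\ge (1-\lambda)g(a)+\lambda g(d)$, and summing gives $g(b)+g(c)\ge g(a)+g(d)$, which is the stated inequality.

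There is no substantive obstacle in this argument; the only point requiring a little care is the concavity of $g$ at the gluing point $1/2$, which is handled by the matching of one-sided derivatives.
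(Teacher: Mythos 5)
Your proof is correct and follows essentially the same route as the paper: both establish concavity of $g$ by splitting at $1/2$ and then deduce the four-point inequality from the fact that $\{x+y',\,x'+y\}$ majorizes $\{x+y,\,x'+y'\}$. The only difference is that the paper invokes Karamata's inequality at the final step, whereas you prove the needed two-point instance of it directly via the identity $(x+y)+(x'+y')=(x+y')+(x'+y)$ and complementary convex combinations; both are valid.
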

\begin{claimproof}
    The second derivative of $g$ equals that one of $h$ on $[0,1/2)$ and is therefore strictly negative on this interval.
    The second derivative of $g$ is zero for $ x \ge 1/2.$
    
    Since $(x+y)+(x'+y')=(x+y')+(x'+y)$ and $x+y'< \min\{x+y,x'+y'\}$ and $\max\{x+y,x'+y'\}< x'+y,$ the pair $\{x'+y,x+y'\}$ majorises $\{x+y,x'+y'\}.$ The claim now follows from Karamata's inequality~\cite{Kar32}.    
\end{claimproof}

We conclude that we can assume that $\Pr(p=r)=1-2a$ and this happens exactly for the $(1-2a)$-quantile, that is, for every $x<x_0,$ we have $\Pr(p=r=x)=\Pr(p=x)$
and $\Pr(p=r=x_0)=1-2a-\Pr(p<x_0).$
\end{proof}

Next, we use the same approach as Sawin.
Let $\mu$ be a probability distribution which minimises 
\begin{equation}\label{eq:2}
        H_{\mu} = (1-\a)\Exp_{(p,q) \sim \mu \times \mu}[ H(p+q-pq)] + \a\Exp'_{p \sim \mu} \left[H\left(\min\left(2p,1/2\right)\right)\right] -\Exp_{p \sim \mu}[H(p)]
\end{equation}
among all probability distributions with expectation bounded by $c$; $\Exp_{p \sim \mu}[H(p)]\le c.$
Let $\Pr_{p\sim \mu}(p=1)=a$ and let the $(1-2a)$-quantile of $\mu$ be $x_0$.
Such a distribution exists, as explained in the proof of Sawin's Lemma 3.
Here $\Exp'$ has to be interpreted as the expectation over the $(1-2a)$-quantile (due to~\cref{lem:quantile_prcomparison}).
\begin{lemma}\label{lem:reduced_minimizingfunction}
    The probability distribution $\mu$ also minimises
    \begin{align*}
        2(1-\a) \Exp_{(p,q) \sim \mu \times \nu} [H(p+q-pq)] -\Exp_{p \sim \nu} H[p] +\a \Exp'_{p \sim \nu} \left[H\left(\min\left(2p,1/2\right)\right)\right]\\
        =\Exp_{q \sim \nu} \left(  2(1-\a) \Exp_{p \sim \mu } [H(p+q-pq)] - H(q) \right) + \Exp'_{q \sim \nu} \left[H\left(\min\left(2q,1/2\right)\right)\right]
    \end{align*} among all probability measures $\nu$ for which the $(1-2a)$-quantile is $x_0$, $\Exp_{p \sim \nu} H[p] \le c$ and $\Pr_{p\sim \nu}(p=1)=a$.
\end{lemma}
\begin{proof}
Consider the combination $\mu'=(1-\eps)\mu+\eps \nu$, which has the same values for $x_0$ and $a=\Pr(p=1)$.
By definition of $\mu$ being a minimiser, $H_{\mu'}-H_{\mu} \ge 0$.
Now $\frac{H_{\mu'}-H_{\mu}}{\eps}$ equals, up to a $O(\eps)$ function,
\begin{align*}
&2(1-\a)\left(  \left(\Exp_{(p,q) \sim \mu \times \nu}- \Exp_{ (p,q) \sim\mu \times \mu}\right)[H(p+q-pq)]\right)-(\Exp_{p \sim \nu}-\Exp_{p \sim \mu}) [H(p)]\\
&+\a \left( 
(\Exp'_{p \sim \nu}-\Exp'_{p \sim \mu}) \left[H\left(\min\left(2p,1/2\right)\right)\right] \right) .
\end{align*}
So by taking $\eps$ sufficiently small, we conclude.
\end{proof}

Now for every fixed constant $0 \le q \le 1,$ the function $F_{\mu}(q)= \Exp_{p \sim \mu}[2(1-\a) H(p+q-pq)- h(q)]$ satisfies $\frac{d}{dq} \left( q(1-q) \frac{d^2}{dq^2} F_{\mu}(q) \right)<0$, as verified in the proof of~\cite[Lem.~3]{Sawin22}.
By direct computation,  we verify that
$\ln 2 \frac{d^2}{dq^2} h\left(2q\right)=\frac {-2(1-q)}{(1-2q)}$
and $\frac{d}{dq} \left( q(1-q)\ln 2 \frac{d^2}{dq^2} h\left(2q\right) \right) =\frac{d}{dq} \left( 
\frac {-2}{(1-2q)q} \right)  =\frac{-2}{(1-2q)^2} <0. $
Hence $q(1-q)\ln 2 \frac{d^2}{dq^2} \left( F_{\mu}(q) +h\left(2q\right) \right)$ is a strictly decreasing function.
This implies that if we consider the function $F_{\mu}(q) +h\left(2q\right)$ on the interval $I_1=[0, min\{x_0,1/4\}]$ 
and $F_{\mu}(q)$ on the interval $I_2=[min\{x_0,1/4\},1/2]$ separately, we observe that the second derivative of each function behaves in one of three ways: it is either strictly positive, strictly negative, or changes sign at a critical point 
$z_1$ or $z_2$.
 


I.e., $F_{\mu}(q) +H\left(2q\right)$
is either strictly convex on one part of $I_1$ (which is of the form $[0,z_1]$ and strictly concave at the other part ($[z_1,  min\{x_0,1/4\}]$), convex on the whole interval, or concave on all of $I_1.$
Similarly $F_{\mu}(q)$ is either strictly convex on one part of $I_2$, $[ min\{x_0,1/4\}, z_2]$, and strictly concave at the remaining part, $[z_2,1]$, convex on all of $I_2$, or concave on the whole interval $I_2$.

On each interval (so for both $I_1$ and $I_2$), the minimum is attained by a probability distribution that either has only one value with positive probability (if the studied function is convex), or two, one of them being the maximum of the interval.
When the latter occurs on $I_2$, one can extend $I_2$ to $[\min\{x_0,1/4\},1]$ and redistribute the mass from $1/2$ over $1$ and the inflection point $z_2$ of $I_2$ and repeat as before. Increasing the probability mass of $1$ even further decreases $\Exp'_{p \sim \mu} [H(\min(2p,1/2))]$, so the latter distribution was not a minimising probability distribution in~\cref{lem:reduced_minimizingfunction}.

This results into candidate probability distributions with at most $4$ different values with positive mass.

In total there are $3^2=9$ combinations (which one can double based on $x_0<1/4$ and $x_0 \ge 1/4$, but each such pair works by the same ideas) to consider for the behaviour on the two intervals. The 3 combinations where the considered function on $I_1$ is convex are almost immediate.
In the other situations we can modify the probability measure even further in steps and conclude at the end that a probability distribution that is a solution in~\cref{lem:reduced_minimizingfunction} has at most $3$ values with positive mass.




If $min\{x_0,1/4\}=x_0$ has positive probability on the first interval and this is different from the (smallest) value $y_0$ on the second interval that got positive probability, one can repeat the argument by replacing $x_0$ by $\min\{y_0, 1/4\}.$
This implies that in case there are $4$ values with positive probability, the values $1$ and $\frac 14$ are among them.
But when we would have $x_0 \ge \frac 14,$ we know that $\Exp'[H(\min\{2p, 1/2\})$ does not depend on the distribution on the interval $[1/4,1]$ and as such, we can repeat the argument about the extremum for $F_{\mu}(q)$. 
At the end, we conclude that the support has no more than $3$ elements with positive probability.
Furthermore, if the support contains exactly $3$ elements with positive probability, at least one of them is at most $1/4.$

We illustrate this for an example where the function $F_{\mu}(q) +H\left(2q\right)$ on $I_1 \subsetneq [0,1/4]$ is both convex and concave (there is an inflection point $z_1 \in I_1$), $y_0>1/4$ and $F_{\mu}(q)$ is convex (convex and concave works similar) on $[x_0,1/2]$, in~\cref{fig:towards_better_distributions}.
Here the red dots represent the values $(q,f(q))$, where $f(q)=F_{\mu}(q) +H\left(2q\right) 1_{q \le min\{x_0,1/4\}}$, for those $q$ that have a positive probability under the candidate probability measure $\mu$.

First we extend $I_1=[0,x_0]$ to $[0,\min\{y_0, 1/4\}]$ and redistribute the mass.
We redefine $I_1$ and $I_2$ and redistribute the mass on $[1/4,1]$ (here it is within $[1/4,1/2]$), to end with a candidate probability distribution for~\cref{lem:reduced_minimizingfunction} which has less than $4$ values with positive probability.

\begin{figure}[h]
    \centering
\begin{tikzpicture}[scale=0.99]
  \draw[->] (-0.1, 0) -- (5.2, 0) node[right] {$x$};
  \draw[->] (0, -0.5) -- (0, 1) node[above] {$y$};
  \draw[domain=0:0.5, smooth, variable=\x, blue] plot ({\x}, {2*(\x-0.25)^2+0.6});
  \draw[domain=0.5:1, smooth, variable=\x, blue] plot ({\x}, {0.85 - 2*(\x-0.75)^2});
   \draw[domain=1:3, smooth, variable=\x, blue] plot ({\x}, {-1/8+1/4*(\x-2)^2});
  \draw[domain=3:5, smooth, variable=\x, blue] plot ({\x}, {3/8 - 1/4*(\x-4)^2});
  \draw[fill, color=red] (0.25,0.6) circle (0.075);
  \draw[fill, color=red] (1,0.725) circle (0.075);
  \draw[fill, color=red] (2.25,-1/8) circle (0.075);
  \draw[fill, color=red] (5,1/8) circle (0.075);
  \foreach \y in {1.25,2.5,5}{
  \draw (\y,-0.2)--(\y,0.2);
  }

   \node at (5,-0.35) {$1$};
    \node at (2.5,-0.4) {$1/2$};
     \node at (1.25,-0.4) {$1/4$};
\end{tikzpicture}
\quad
\begin{tikzpicture}[scale=0.99]
  \draw[->] (-0.1, 0) -- (5.2, 0) node[right] {$x$};
  \draw[->] (0, -0.5) -- (0, 1) node[above] {$y$};
  \draw[domain=0:0.5, smooth, variable=\x, blue] plot ({\x}, {2*(\x-0.25)^2+0.6});
  \draw[domain=0.5:1.25, smooth, variable=\x, blue] plot ({\x}, {0.85 - 2*(\x-0.75)^2});
   \draw[domain=1.25:3, smooth, variable=\x, blue] plot ({\x}, {-1/8+1/4*(\x-2)^2});
  \draw[domain=3:5, smooth, variable=\x, blue] plot ({\x}, {3/8 - 1/4*(\x-4)^2});
  \draw[fill, color=red] (0.5,0.725) circle (0.075);
  \draw[fill, color=red] (1.25,0.35) circle (0.075);
  \draw[fill, color=red] (2.25,-1/8) circle (0.075);
  \draw[fill, color=red] (5,1/8) circle (0.075);
  \foreach \y in {1.25,2.5,5}{
  \draw (\y,-0.2)--(\y,0.2);
  }
     \node at (5,-0.35) {$1$};
    \node at (2.5,-0.4) {$1/2$};
     \node at (1.25,-0.4) {$1/4$};
\end{tikzpicture}
\quad
\begin{tikzpicture}[scale=0.99]
  \draw[->] (-0.1, 0) -- (5.2, 0) node[right] {$x$};
  \draw[->] (0, -0.5) -- (0, 1) node[above] {$y$};
  \draw[domain=0:0.5, smooth, variable=\x, blue] plot ({\x}, {2*(\x-0.25)^2+0.6});
  \draw[domain=0.5:1.25, smooth, variable=\x, blue] plot ({\x}, {0.85 - 2*(\x-0.75)^2});
   \draw[domain=1.25:3, smooth, variable=\x, blue] plot ({\x}, {-1/8+1/4*(\x-2)^2});
  \draw[domain=3:5, smooth, variable=\x, blue] plot ({\x}, {3/8 - 1/4*(\x-4)^2});
  \draw[fill, color=red] (0.5,0.725) circle (0.075);
  \draw[fill, color=red] (1.25,0.015625) circle (0.075);
  \draw[fill, color=red] (2.25,-1/8) circle (0.075);
  \draw[fill, color=red] (5,1/8) circle (0.075);
  \foreach \y in {1.25,2.5,5}{
  \draw (\y,-0.2)--(\y,0.2);
  }
     \node at (5,-0.35) {$1$};
    \node at (2.5,-0.4) {$1/2$};
     \node at (1.25,-0.4) {$1/4$};
\end{tikzpicture}
\quad
\begin{tikzpicture}[scale=0.99]
  \draw[->] (-0.1, 0) -- (5.2, 0) node[right] {$x$};
  \draw[->] (0, -0.5) -- (0, 1) node[above] {$y$};
  \draw[domain=0:0.5, smooth, variable=\x, blue] plot ({\x}, {2*(\x-0.25)^2+0.6});
  \draw[domain=0.5:1.25, smooth, variable=\x, blue] plot ({\x}, {0.85 - 2*(\x-0.75)^2});
   \draw[domain=1.25:3, smooth, variable=\x, blue] plot ({\x}, {-1/8+1/4*(\x-2)^2});
  \draw[domain=3:5, smooth, variable=\x, blue] plot ({\x}, {3/8 - 1/4*(\x-4)^2});
  \draw[fill, color=red] (0.5,0.725) circle (0.075);

  \draw[fill, color=red] (1.75,-1/8) circle (0.075);
  \draw[fill, color=red] (5,1/8) circle (0.075);
  \foreach \y in {1.25,2.5,5}{
  \draw (\y,-0.2)--(\y,0.2);
  }
     \node at (5,-0.35) {$1$};
    \node at (2.5,-0.4) {$1/2$};
     \node at (1.25,-0.4) {$1/4$};
\end{tikzpicture}
    \caption{An example of improving the distribution}
    \label{fig:towards_better_distributions}
\end{figure}

\subsection{Verification for distributions with support of size at most $3$}\label{subsec:ver_support3}

Once the support is reduced to $3$ elements, $\{a_1,a_2,1\}$, by knowing the associated probabilities $p_1, p_2, 1-p_1-p_2$ of each element, the inequality in Question~\ref{ques:Sawin} can be checked.
As such, we find an optimisation problem in $4$ variables.
Using Maple, it has been checked in multiple ways; by solving a minimisation problem in multiple regimes, and by plotting an implicit plot, as well as plots with a fixed choice for $a_1$ assuming $\Exp[p]=c$.
From these, we note that there are two local regions where the minima occur; around $a_1=0$ and around $p_1=0$\footnote{See \url{https://github.com/StijnCambie/UCconjecture}, documents \text{FinalComputation23}}, corresponding with the cases where $p$ is $\{0,1\}$-valued and the atomic one used to show sharpness in Subsection~\ref{subsec:uppbound}.

Finally, we also give a more rigorous proof for the case where the distribution is atomic, i.e., $\Pr(p=b)=1-a$ and $\Pr(p=1)=a$ and $\Exp[p]=a+(1-a)b \le c$, where $c\sim 0.3823455$ is the claimed optimum.
Then $\Exp[H(p+q-pq)]=(1-a)^2 h(2b-b^2), \Exp[H(p)]=(1-a)h(b)$ and $\Exp\left[H\left(max\left(p,r,\min\left(p+r,1/2\right)\right)\right)\right] \ge (1-2a)h( \min(2b,1/2) ).$
Since the case where $\Pr(p=r=1)=0$ is the worst case, we need to show that 
$$(1-\a)(1-a)^2 h(2b-b^2) + \a(1-2a)h( \min(2b,1/2) )-(1-a)h(b) \ge 0,\mbox{ or equivalently}$$
$$
(1-\a) h(2b-b^2)  a^2 + \left( -2(1-\a) h(2b-b^2) -2\a h( \min(2b,1/2) )+h(b) \right)a +O_{b, \alpha}(1) \ge 0 $$
For fixed (non-zero) $b$, this is a quadratic function in $a$ with positive leading coefficient, which attains its minimum at 
$a= 1+\frac{ 2\a h( \min(2b,1/2) )-h(b)}{2(1-\a)h(2b-b^2)}>\frac{c-b}{1-b}$ and thus it is sufficient to prove this in the case where $a=\frac{c-b}{1-b}.$\footnote{See \url{https://github.com/StijnCambie/UCconjecture/blob/main/Sharpness.sagews}}

\subsection{Precise verification}\label{subsec:ver_combinedideas}

If we combine our conclusions from Subsections~\ref{subsec:redSup_to4} and~\ref{subsec:reduction} with the one from~\cite{Yu22}, we obtain that there are two possible forms for the joint distribution of $(p,r).$
Either $p=r$ and the support of $p$ has size bounded by $2$ (the elements being bounded by $1/2$), or the support has $3$ elements $\{a_1,a_2,1\}$ and $p_1=1-2p_2$, where $\Pr(p=a_2, r=1)=\Pr(p=1, r=a_2)=p_2$.
Equivalently, with the notation of~\cite{Yu22}, the distribution $P_{pr}$ is of the form $(1-\beta) Q_{a,a}+ \beta Q_{b,b}$ or $(1-\beta) Q_{a,a}+ \beta Q_{1,b}$ (where $a\le b$).

Hereby for a fixed choice of $c$, $\beta$ is a function of $a$ and $b.$
In the first case, $a<c<b$ and $\beta=\frac{c-a}{b-a}$. In the second case, $\beta=\frac{2(c-a)}{1+b-2a}.$

As such, the final verification for~\cref{ques:Sawin} can be deduced from an inequality involving only two variables.
This final verification has been done in~\url{https://github.com/StijnCambie/UCconjecture}, documents \text{FinalComputation24}\footnote{For some reason, minus is replaced by $K$ in the PDF.},
If $p=r$, the inequality is strict.
In the case with the support containing $1$, we deduce that the atomic distribution from~\cref{subsec:uppbound} is the (unique) minimiser, and the inequality is true and tight.

\section{Proof of bound for sharper union-closed conjecture}\label{sec:proof}

Having established the answer to~\cref{ques:Sawin} in the previous section, we now present the formal proof of Theorem~\ref{thr:improvement}, as sketched in~\cite{Sawin22}, to complete the exposition
Let $\a \sim 0.0356069$ and  $c\sim 0.3823455$ be the previously determined optimal constants for Question~\ref{ques:Sawin}.

\begin{proof}[Proof of Theorem~\ref{thr:improvement}]
Assume there is a (nonempty) union-closed family $\F \subset 2^{[n]}$ for which every element $i \in [n]$ appears in at most a $c$-fraction of the sets in $\F$. Without loss of generality, we can assume that $1$ appears in at least one set in $\F.$
    We consider random variables $A,B,C$, which are three uniform samples from $\F$, defined as follows.
    The uniform sampling of $B$ happens independently of the sampling of $A$ and $C.$ 
    The latter two are sampled element-wise and in a dependent way.
    We denote $A_i=1$ if $i\in A$ and otherwise $A_i=0$, i.e., it is the indicator function $1_{i \in A}$,
    and $A_{<i}=(A_1, \ldots, A_{i-1})$ is the sequence of the first $i-1$ indicator random variables.
    Analogously $C_i$ and $C_{<i}$ are defined.
    
  For every $i \in [n]$ and given (fixed) realisations 
  $a_{<i}=(a_1, \ldots, a_{i-1})$ and $c_{<i}=(c_1, \ldots, c_{i-1})$, we consider the fractions
    $$f_a = \frac{\abs{\{ S \in \F \colon i \in S, S_{<i}=a_{<i} \} } }{\abs{\{ S \in \F \colon S_{<i}=a_{<i} \} } } \mbox{ and } f_c = \frac{\abs{\{ S \in \F \colon i \in S, S_{<i}=c_{<i} \} } }{\abs{\{ S \in \F \colon S_{<i}=c_{<i} \} } }.$$
    If $\max\{f_a, f_c\}>0.5,$
    we take $x \in U([0,1])$, a uniformly random element from $[0,1]$, and take $a_i=[x<f_a]$ and $c_i=[x<f_c]$.
    That is, $a_i=1$ if $x<f_a$ and otherwise $a_i=0.$ 
    Similarly, if $f_a, f_c\le 1/2,$ we take $a_i=[x<f_a]$ and $c_i=[0.5-f_c <x<0.5]$ for the uniform random generated $x \in [0,1].$ 

    If we do the previous steps for every $i \in [n],$ there are up to $2^{i-1}$ different realisations and fractions. 
    
    Let $p_i=\Pr(A_{<i+1} \mid A_{<i})$ be the conditional probability distribution, associated with the probability (fraction) for $a_{<i+1}$ given any realisation of $a_{<i}.$
    Define $r_i=\Pr(C_{<i+1} \mid C_{<i})$ completely analogous.
    
    Then with the above steps for concrete realisations, for the random variable $A\cup C,$
    we have $$\Pr[ (A\cup C)_i \mid A_{<i}, C_{<i} ]=\max\{ p_i, r_i, \min(p_i+r_i,1/2)\}.$$
    By the product rule applied to the conditional probabilities $p_i$, we have that $A$ (and similarly $C$) will be uniformly distributed over $\F$, that is for a particular set $T \in \F,$ we have 
    $$\Pr(A=T) = \prod_{i \in T} \frac{\abs{\{ S \in \F \colon i \in S, S_{<i}=T_{<i} \} } }{\abs{\{ S \in \F \colon S_{<i}=T_{<i} \} } }
    \cdot \prod_{i \not\in T} \frac{\abs{\{ S \in \F \colon i \not \in S, S_{<i}=T_{<i} \} } }{\abs{\{ S \in \F \colon S_{<i}=T_{<i} \} } }=\frac{1}{\abs\F}.$$
    Hence $A,B,C$ all have the (same) uniform probability distribution.
    Let $q_i= \Pr(i\in B \mid B_{<i})$ be a conditional probability distribution.
    Now $p_i, q_i$ and $r_i$ are identically distributed conditional probability distributions, where $q_i$ is independent from the other two, but $p_i$ and $q_i$ are dependent.
    
    The chain rule and data processing inequality respectively yield
    $$H( (A \cup B)_{<i+1} )=H( (A \cup B)_{<i} )+H( (A \cup B)_{<i+1} \mid (A \cup B)_{<i} ) \ge H( (A \cup B)_{<i} )+H( (A \cup B)_{<i+1} \mid A_{<i},B_{<i} ),$$
    while $H(A_{<i+1})=H(A_{<i})+H(A_{<i+1}\mid A_{<i}).$
    As such to prove that 
    $$(1-\a) H(A \cup B)+\a H(A\cup C) > H(A),$$
    it is sufficient to prove that 
     $(1-\a)H( (A \cup B)_{1} )+\a H( (A \cup C)_{1}) > H(A_{1}) $ and 
    $(1-\a)H( (A \cup B)_{<i+1} \mid A_{<i},B_{<i} )+\a H( (A \cup C)_{<i+1} \mid A_{<i},C_{<i} ) \ge H(A_{<i+1}\mid A_{<i}) $ for every $i\ge 2.$
    But with the conditional probability distribution of $(A\cup B)_i$, with which we refer to $\Pr((A\cup B)_{<i+1} \mid (A\cup B)_{<i} )$, being $p_i+q_i-p_iq_i$ (by the principle of inclusion-exclusion) and of $(A\cup C)_i$ being $\max\{ p_i,r_i,\min(p_i+r_i,0.5)\}$ (by the choice of the samples), where $p_i,q_i,r_i$ all have expectation less than $c$, this follows from the answer to Question~\ref{ques:Sawin}. Since equality cannot appear for $i=1$, the inequality is strict. 
    We conclude that $\max\{H(A \cup C), H(A \cup B)\} > H(A)=\log_2 \abs \F$, which is a contradiction. So no such family $\F$ as initially assumed exists.
\end{proof}

\subsection*{Acknowledgement}

We thank an anonymous referee for their careful reading and valuable suggestions, including critical remarks on readability that helped improve the presentation of the paper. Their recommendation to consider connections with the work of Yu~\cite{Yu22} led to the addition of Subsection~\ref{subsec:ver_combinedideas}, strengthening the resolution of~\cref{ques:Sawin}.

\paragraph{Open access statement.} For the purpose of open access,
a CC BY public copyright license is applied
to any Author Accepted Manuscript (AAM)
arising from this submission.

\end{document}